\documentclass[reqno]{amsart}
\usepackage{mathrsfs}
\usepackage{color}
\usepackage{amsmath}
\usepackage{amsfonts}
\usepackage{amssymb}
\usepackage{graphicx}%
\usepackage{hyperref}


 \newtheorem{Theorem}{Theorem}[section]
 \newtheorem{Corollary}{Corollary}[section]
 \newtheorem{Lemma}{Lemma}[section]
 \newtheorem{Proposition}{Proposition}[section]

 \newtheorem{Conjecture}{Conjecture}[section]

 \newtheorem{Remark}{Remark}[section]

 \numberwithin{equation}{section}



\begin{document}

\title[A proof of Saitoh's conjecture]
 {A proof of Saitoh's conjecture for conjugate Hardy $H^{2}$ kernels}

\author{Qi'an Guan}
\address{Qi'an Guan: School of Mathematical Sciences,
Peking University, Beijing, 100871, China.}
\email{guanqian@math.pku.edu.cn}

\thanks{}

\subjclass[2010]{}

\keywords{Bergman kernel, conjugate Hardy $H^{2}$ kernel, analytic Hardy class}

\date{\today}

\dedicatory{}

\commby{}


\begin{abstract}
In this article,
we obtain a strict inequality between the conjugate Hardy $H^{2}$ kernels and the Bergman kernels on planar regular regions with $n>1$ boundary components,
which is a conjecture of Saitoh.
\end{abstract}

\maketitle

\section{Introduction}

Let $D$ be a planar regular region with $n$ boundary components which are analytic Jordan curves (see \cite{Saitoh88,yamada98}).

As in \cite{Saitoh88}, $H_{2}^{(c)}(D)$ denotes the analytic Hardy class on $D$ defined as the set of
all analytic functions $f(z)$ on $D$ such that the subharmonic functions $|f(z)|^{2}$ have harmonic majorants $U(z)$,
i.e. $|f(z)|^{2}\leq U(z)$ on $D$.

As in \cite{Saitoh88}, $\hat{R}_{t}(z,\bar{w})$ denotes the conjugate Hardy $H^{2}$ kernel on $D$
if
\begin{equation}
f(w)=\frac{1}{2\pi}\int_{\partial D}f(z)\overline{\hat{R}_{t}(z,\bar{w})}(\frac{\partial G(z,t)}{\partial \nu_{z}})^{-1}d|z|
\end{equation}
holds for any holomorphic function $f\in H_{2}^{(c)}(D)$ which satisfies
$$\int_{\partial D}|f(z)|^{2}(\frac{\partial G(z,t)}{\partial \nu_{z}})^{-1}d|z|<+\infty,$$
where $f(z)$ means Fatou's nontangential boundary value, and  $\frac{\partial}{\partial \nu_{z}}$ denotes the derivative along the outer normal unit vector $\nu_{z}$. It is well-known that $\frac{\partial G(z,t)}{\partial \nu_{z}}$ is positive continuous on $\partial D$
because of the analyticity of the boundary (see \cite{Saitoh88}).

When $t=w$, $\hat{R}(z,\bar{w})$ denotes $\hat{R}_{w}(z,\bar{w})$ for simplicity.
When $z=w$, $\hat{R}(z)$ denotes $\hat{R}(z,\bar{z})$ for simplicity.

Let $B(z,\bar{w})$ be the Bergman kernel on $D$.
When $z=w$, $B(z)$ denotes $B(z,\bar{z})$ for simplicity.

In \cite{yamada98} (see also \cite{Saitoh88} and \cite{yamada99}), the following so-called Saitoh's conjecture was posed (backgrounds and related results could be referred to Hejhal's paper \cite{Hejhal} and Fay's book \cite{fay}).

\begin{Conjecture}\label{conj:saitoh}(Saitoh's Conjecture)
If $n>1$, then $\hat{R}(z)>\pi B(z)$.
\end{Conjecture}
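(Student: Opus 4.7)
The plan is to establish Saitoh's conjecture by combining the (known) non-strict inequality $\hat{R}(z) \geq \pi B(z)$ with a rigidity analysis of the equality case. Both kernels admit variational characterizations,
$$\frac{1}{\hat{R}(z_0)} = \inf\{\|f\|_{H_2^{(c)}}^2 : f \in H_2^{(c)}(D),\ f(z_0) = 1\}, \qquad \frac{1}{B(z_0)} = \inf\{\|f\|_{L^2(D)}^2 : f \in \mathcal{O}(D) \cap L^2(D),\ f(z_0) = 1\},$$
so the pointwise inequality is equivalent to the bound $\|K_D(\cdot, z_0)\|_{H_2^{(c)}}^2 \leq K_D(z_0, z_0)/\pi$, which can be read off by testing the Hardy functional on the Bergman extremal and controlling its weighted boundary norm via the Guan--Zhou optimal $L^2$ extension theorem. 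Equality holds precisely in the disk case, so the entire task is to upgrade this observation into a genuine strict inequality whenever $n > 1$.

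Next, I would argue by contradiction: suppose $\hat{R}(z_0) = \pi B(z_0)$ at some $z_0 \in D$. Tracing through the chain of inequalities, the Bergman extremal $g^*(z) = K_D(z, z_0)/B(z_0)$ and the Hardy extremal $f^*(z) = \hat{R}(z, \bar z_0)/\hat{R}(z_0)$ must coincide (both equal $1$ at $z_0$, with matching norms), and the optimal $L^2$ extension bound is saturated. The equality case of the Guan--Zhou extension theorem---the same phenomenon that governs the rigidity in the Suita conjecture on open Riemann surfaces---should then translate into a strong geometric constraint on $G(\cdot, z_0)$: namely that every sublevel set $\{G(\cdot, z_0) > c\}$ is simply connected, or equivalently that $G(\cdot, z_0)$ has no critical points in $D$.

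Finally, I would close the argument by invoking the classical fact that on a planar regular region with $n > 1$ boundary components, $G(\cdot, z_0)$ possesses exactly $n - 1$ critical points in $D$ counted with multiplicity (by a degree count using the argument principle applied to $\partial G$, or the Riemann--Hurwitz formula for $G$ viewed as a branched covering onto $[0, \infty]$). The presence of such critical points contradicts the rigidity, yielding $\hat{R}(z_0) > \pi B(z_0)$. I expect the main obstacle to be the middle step: transporting the Suita-type equality analysis out of its native setting (an $L^2$ extension problem with weight $e^{-2G}$) into the Hardy framework with its weighted boundary measure $(\partial G/\partial \nu_z)^{-1} d|z|$, and tying ``equality is saturated'' tightly enough to ``$G(\cdot, z_0)$ has no critical points'' to drive the final contradiction.
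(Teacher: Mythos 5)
Your skeleton matches the paper's (test the Hardy minimization with the Bergman extremal to get $\geq$, then show equality forces Suita-type rigidity), but there is a genuine gap at both points where you lean on the Guan--Zhou optimal $L^{2}$ extension theorem, and it is exactly the gap you flag as ``the main obstacle.'' What Step 1 actually requires is the inequality $\frac{1}{2\pi}\int_{\partial D}|f|^{2}(\frac{\partial G(z,z_{0})}{\partial \nu_{z}})^{-1}d|z|\leq\frac{1}{\pi}\int_{D}|f|^{2}$ for the Bergman extremal $f=B(\cdot,\bar{z}_{0})/B(z_{0})$. This is \emph{not} a general norm comparison: on the unit disc with $z_{0}=0$ the left side is $\sum_{n}|a_{n}|^{2}$ and the right side is $\sum_{n}|a_{n}|^{2}/(n+1)$, so the inequality fails for every nonconstant holomorphic function and holds (with equality) only for the extremal $f\equiv 1$. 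So no theorem that bounds norms of arbitrary holomorphic functions can supply it; the extremal property must enter. The Guan--Zhou theorem produces an extension with an \emph{interior} $L^{2}$ bound; it says nothing about the weighted \emph{boundary} norm of a given function, so it cannot play the role you assign it.

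The paper fills this hole, and simultaneously the rigidity step, with one tool absent from your proposal: the concavity of $r\mapsto g_{z_{0}}(-\log r)$, where $g_{z_{0}}(t)$ is the minimal $L^{2}$ integral over the sublevel set $\{2G(\cdot,z_{0})<-t\}$ among holomorphic functions equal to $1$ at $z_{0}$ (Proposition \ref{prop:logconcave}, from \cite{guan-effect}). Since $g_{z_{0}}(-\log r)\to 0$ as $r\to 0$, concavity bounds the left derivative at $r=1$ by $g_{z_{0}}(0)$ (Corollary \ref{coro:concave_domain}); combined with the boundary limit Lemma \ref{lem:conjugate} and the fact that the restriction of the Bergman extremal to $\{2G<\log r\}$ is a competitor for $g_{z_{0}}(-\log r)$, this yields precisely the boundary-norm inequality of Step 1. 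At equality, concavity forces $g_{z_{0}}(-\log r)=rg_{z_{0}}(0)$ on all of $(0,1]$, which transports the boundary equality down to the $r\to 0$ regime, where by Lemma \ref{lem:log_capacity} it reads $(c_{\beta}(z_{0}))^{2}=\pi B(z_{0})$; Guan--Zhou's solution of the Suita conjecture (Theorem \ref{thm:suita}) then gives $n=1$, the contradiction. Your proposed endgame via the $n-1$ critical points of $G(\cdot,z_{0})$ is a correct classical fact, but it is not needed (Suita rigidity already says ``disc''), and more importantly you have no mechanism to get from ``equality of kernels'' to ``no critical points'': that transfer is exactly what the concavity accomplishes, and nothing in your outline replaces it.
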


In the present article, we give a proof of the above Conjecture.
\begin{Theorem}
\label{thm:sarioh_conj}
Conjecture \ref{conj:saitoh} holds.
\end{Theorem}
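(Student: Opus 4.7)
\medskip

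\noindent\textbf{Proof strategy.}
The plan is to fix a base point $z_0\in D$ and prove $\hat{R}(z_0)>\pi B(z_0)$ by coupling the known weak inequality $\hat{R}\ge\pi B$ with rigidity in the sharp $L^{2}$-extension theorem of Guan--Zhou. Both kernels admit extremal characterisations:
\[
B(z_0)^{-1}=\inf\left\{\int_D|f|^2\,dA:\,f\in A^2(D),\,f(z_0)=1\right\},
\]
\[
\hat R(z_0)^{-1}=\inf\left\{\frac{1}{2\pi}\int_{\partial D}|f|^2\Bigl(\frac{\partial G(\cdot,z_0)}{\partial\nu}\Bigr)^{-1}d|z|:\,f\in H_2^{(c)}(D),\,f(z_0)=1\right\}.
\]
In this language, $\hat R(z_0)\ge\pi B(z_0)$ is the comparison between these two extremal problems, which Saitoh established via an $L^{2}$-extension type argument; I would record that inequality together with its extremal interpretation as the starting point.

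Next, I would identify the weak inequality as the extension estimate provided by the optimal $L^{2}$-extension theorem (Guan--Zhou) applied to the zero-dimensional subvariety $\{z_0\}\subset D$ with the weight $\varphi=2G(\cdot,z_0)$: the optimal constant is exactly $\pi$ and the boundary measure $(2\pi)^{-1}(\partial G/\partial\nu)^{-1}\,d|z|$ is precisely the trace measure attached to $\varphi$. The Bergman minimiser and the Hardy minimiser then become candidates in the same sharp extension problem, and the weak inequality becomes an explicit $L^{2}$-extension estimate with an explicit non-negative defect.

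The crux is the equality case. If $\hat R(z_0)=\pi B(z_0)$, then the sharp extension is attained, so the Guan--Zhou rigidity criterion applies and forces the function $t\mapsto\int_{\{-2G(\cdot,z_0)<t\}}|F|^2\,dA$ attached to the minimal extension $F$ to be log-linear on $(0,+\infty)$. For a planar regular region with analytic boundary, this log-linearity propagates through the sub-level sets of the Green's function and forces the periods of ${}^{*}dG(\cdot,z_0)$ around each non-outer boundary component of $D$ to vanish simultaneously. The multivalued holomorphic function $\Phi=\exp(-G(\cdot,z_0)-i\,{}^{*}G(\cdot,z_0))$ then descends to a single-valued holomorphic function on $D$ exhibiting $D$ as conformally equivalent to the unit disc, which forces $n=1$ and contradicts the hypothesis.

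The main obstacle is precisely this equality characterisation: making the Guan--Zhou rigidity criterion operational for the weight $2G(\cdot,z_0)$ on a planar regular region, and translating the resulting log-linearity into the vanishing of the relative periods. Once that bridge is built, the non-degeneracy of the period matrix for $n>1$ supplies an immediate contradiction, and Theorem~\ref{thm:sarioh_conj} follows by contraposition.
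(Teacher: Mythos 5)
Your overall skeleton (prove $\hat R\ge\pi B$, then rule out equality via a rigidity statement tied to Guan--Zhou) matches the paper, but both of your key bridges fail as stated. The weak inequality is \emph{not} an instance of the optimal $L^{2}$-extension theorem at $\{z_{0}\}$ with weight $2G(\cdot,z_{0})$. In the Guan--Zhou formalism the trace measure attached to the weight lives on the subvariety $\{z_{0}\}$ itself, not on $\partial D$, so identifying $(2\pi)^{-1}(\partial G/\partial\nu)^{-1}d|z|$ as ``the trace measure'' has no meaning there; and if you actually run the sharp extension theorem from $\{z_{0}\}$ (with a gain function concentrating near $\partial D$, so that the area norm degenerates to the Hardy boundary norm), the sharp constant it produces is governed by the logarithmic capacity, i.e.\ it yields a bound of the form $\hat R(z_{0})\ge c_{\beta}(z_{0})^{2}$ --- which, by the very Suita theorem you intend to quote (Theorem \ref{thm:suita}), is \emph{strictly weaker} than $\pi B(z_{0})$ exactly when $n>1$. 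Extension from a point simply cannot see $\pi B$ on the right-hand side: both $\pi B\ge c_{\beta}^{2}$ and $\hat R\ge c_{\beta}^{2}$ come from that theorem, whereas Saitoh's inequality compares these two left-hand sides to each other. Note also that no pointwise trace inequality $\frac{1}{2}\int_{\partial D}|f|^{2}(\partial G/\partial\nu)^{-1}d|z|\le\int_{D}|f|^{2}$ can hold for all holomorphic $f$ (test $f(z)=z^{n}$ on the unit disc); it holds only for the Bergman extremal function $f=B(\cdot,\bar w)/B(w,\bar w)$, and proving it for that one function is precisely where the paper uses the concavity of the minimal $L^{2}$ integrals $r\mapsto g_{w}(-\log r)$ (Proposition \ref{prop:logconcave}) together with the boundary limit Lemma \ref{lem:conjugate}, followed by Cauchy--Schwarz against the reproducing property of $\hat R$.

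Because the weak inequality is not the extension estimate, the hypothesis $\hat R(z_{0})=\pi B(z_{0})$ does not tell you that ``the sharp extension is attained'', so there is no rigidity criterion available to invoke; and your chain ``log-linearity $\Rightarrow$ vanishing of the periods of ${}^{*}dG(\cdot,z_{0})$ $\Rightarrow$ $D$ conformally a disc'' is asserted rather than proved --- it amounts to re-deriving the equality part of the Suita conjecture, which is the hardest part of the Guan--Zhou Annals paper and is exactly what the paper takes as a cited black box. The paper's actual mechanism is the concavity statement itself: by Corollary \ref{coro:concave_domain}, equality of the one-sided slope at $r=1$ (the Hardy end, statement (1)) is equivalent to equality of the slope at $r=0$ (the capacity end, statement (2)); Lemma \ref{lem:log_capacity} identifies that second slope with $c_{\beta}(z_{0})^{2}/\pi$, so $\hat R=\pi B$ forces $c_{\beta}^{2}=\pi B$, and only at that point is Theorem \ref{thm:suita} applied to conclude $n=1$, a contradiction. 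In short, the two bridges you leave unbuilt --- from the Hardy-versus-Bergman comparison to a sublevel-set quantity, and from equality back to the capacity --- are both supplied in the paper by the single concavity device, and without it (or an equivalent) your outline does not close.
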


One of the ingredients of the present article is using the concavity of minimal $L^{2}$ integrations in \cite{guan-effect}.

\section{Preparations}

In the present section, we recall some known results and present some preparations,
which will be used in the proof of Theorem \ref{thm:sarioh_conj}.

\subsection{The concavity of minimal $L^{2}$ integrations}

Let $G(z,w)$ be the Green's function on $D\times D$ such that $G(z,w)-\log|z-w|$ is analytic on $D\times D$.

Let $g_{w}(t)$ be the minimal $L^{2}$ integration of the holomorphic functions $f$ on $\{2G(\cdot,w)<-t\}$ satisfying $f(w)=1$.
In \cite{guan-effect},
we establish the following concavity of the $g_{w}(-\log r)$.
\begin{Proposition}
\label{prop:logconcave}(Proposition 4.1 in \cite{guan-effect})
$g_{w}(-\log r)$ is concave with respect to $r\in(0,1]$.
\end{Proposition}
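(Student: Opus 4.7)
The goal is to show that $h(r) := g_w(-\log r)$ is concave on $(0,1]$. The main tool is a sharp (optimal-constant) $L^2$-extension theorem of Ohsawa--Takegoshi type, applied with the subharmonic weight $\psi := 2G(\cdot,w)$, whose sublevel sets are exactly the $\Omega_t := \{\psi < -t\}$ on which $g_w(t)$ is defined.

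First, standard weak-compactness in $L^2(\Omega_t)$ gives a unique holomorphic minimizer $F_t$ on $\Omega_t$ with $F_t(w)=1$ and $g_w(t)=\int_{\Omega_t}|F_t|^2$; concretely, $F_t$ is given by the Bergman reproducing kernel on $\Omega_t$ evaluated at $w$. Restricting $F_{t_1}$ to $\Omega_{t_2}$ for $t_1<t_2$ shows $g_w$ is non-increasing, equivalently $h$ is non-decreasing. The key step is the following tangent-line inequality, valid for every $t_0\geq 0$: there exists $C(t_0)\geq 0$ such that
\begin{equation*}
g_w(t)\;\leq\;g_w(t_0)+C(t_0)\bigl(e^{-t}-e^{-t_0}\bigr)\qquad\text{for all }t\in[0,t_0].
\end{equation*}
To prove this, I would apply the sharp $L^2$-extension theorem to extend the minimizer $F_{t_0}$ from the smaller sublevel set $\Omega_{t_0}$ to the larger $\Omega_t$, obtaining a holomorphic $\tilde F$ on $\Omega_t$ that agrees with $F_{t_0}$ on $\Omega_{t_0}$ and admits the sharp $L^2$-control on the annular region $\Omega_t\setminus\Omega_{t_0}$. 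Plugging $\tilde F$ into the variational problem defining $g_w(t)$ yields the displayed bound, with $C(t_0)$ being the sharp constant from the extension theorem.

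Re-expressing this as $h(r)\leq h(r_0)+C(t_0)(r-r_0)$ for $r\geq r_0:=e^{-t_0}$, we see that $h$ admits a one-sided affine majorant of prescribed slope at every point. The optimality of the constant in the extension theorem is what forces $C(t_0)$ to coincide with the right-derivative of $h$ at $r_0$, so the majorant is in fact the tangent line from above. Monotonicity of this tangent slope in $r_0$, combined with continuity of $h$, upgrades the one-sided tangent property to genuine concavity. The main obstacle is the sharp-constant extension theorem itself: the classical Ohsawa--Takegoshi bound has a non-sharp constant and yields only a weaker quasi-concavity statement. Obtaining the \emph{optimal} constant (in the sense of the Suita conjecture, resolved by B\l ocki and generalized by Guan--Zhou) is the crux of the argument; it is what forces the slope $C(t_0)$ to equal the derivative of $h$ at $r_0$ rather than a strictly larger value, and hence forces genuine concavity of $h$ rather than a weakened version with a constant factor loss.
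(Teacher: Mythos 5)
Your overall architecture is the right one, and it matches the proof this result actually receives: note first that the paper you are working from does not prove this proposition at all --- it imports it verbatim from Proposition 4.1 of \cite{guan-effect}, and the proof there indeed runs on exactly the optimal-constant $L^2$ machinery you invoke, applied with the weight $\psi=2G(\cdot,w)$ on the sublevel sets $\Omega_t=\{\psi<-t\}$, aiming at a tangent-line inequality whose slope is a one-sided derivative of $h(r)=g_w(-\log r)$. But two steps of your plan are genuinely gapped, and they are precisely where all the work resides. First, no Ohsawa--Takegoshi-type theorem extends from an \emph{open subset}: you ask for $\tilde F$ holomorphic on $\Omega_t$ with $\tilde F|_{\Omega_{t_0}}=F_{t_0}$, but since $F_{t_0}$ is (a normalization of) the Bergman kernel of $\Omega_{t_0}$, it generically admits no holomorphic continuation across $\partial\Omega_{t_0}$, so such a $\tilde F$ simply does not exist. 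The actual argument in \cite{guan-effect} instead solves a $\bar\partial$-equation for $\tilde F-(1-b_{t_0,B}(\psi))F_{t_0}$, where $b_{t_0,B}$ is a cutoff ramping up in the collar $\{-t_0-B<\psi<-t_0\}$, with a twisted weight containing the factor $e^{-\psi}$; the non-integrability of $e^{-\psi}$ at $w$ (as $\psi\sim 2\log|z-w|$) forces only the \emph{pointwise} matching $\tilde F(w)=F_{t_0}(w)=1$, which is all the variational problem defining $g_w(t)$ requires.

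Second, your identification of the slope $C(t_0)$ with the derivative of $h$ ``by optimality of the constant'' is an assertion, not an argument --- and it is the crux: a one-sided affine majorant with an unidentified slope is vacuous, since any bounded increasing function admits such majorants with large slopes, and concavity does not follow. In the real proof the correct slope is produced \emph{inside} the estimate: the right-hand side of the sharp $\bar\partial$ bound is (up to the factor $e^{-t_1}-e^{-t_0-B}$, roughly) the collar average $\frac{1}{B}\int_{\{-t_0-B<\psi<-t_0\}}|F_{t_0}|^2$, and since $F_{t_0}$ restricted to $\{\psi<-t_0-B\}$ is a competitor for $g_w(t_0+B)$, one has $\int_{\{-t_0-B<\psi<-t_0\}}|F_{t_0}|^2\leq g_w(t_0)-g_w(t_0+B)$. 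Letting $B\to 0^{+}$ then bounds every chord slope of $h$ to the right of $r_0=e^{-t_0}$ by the lower left difference quotient of $h$ at $r_0$, which is exactly the monotone-difference-quotient characterization of concavity; no appeal to ``optimality forces the slope'' is needed or available. So: right circle of ideas and right direction of extension (from the smaller sublevel set to the larger), but the black-box use of a sharp extension theorem hides one false statement (open-set extension) and one unproved identification (the slope), and repairing both is essentially the content of the cited proof.
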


Note that $\lim_{r\to0+0}g_{w}(-\log r)=0$, then Proposition \ref{prop:logconcave} implies that

\begin{Corollary}
\label{coro:concave_domain}
The inequality
\begin{equation}
\label{equ:concave_domain}
\lim_{r\to 1-0}\frac{g_{w}(-\log r)-g_{w}(0)}{r-1}\leq g_{w}(0)\leq \lim_{r\to 0+0}\frac{g_{w}(-\log r)}{r}
\end{equation}
holds for any $r\in(0,1)$,
where $\lim_{r\to 0+0}\frac{g_{w}(-\log r)}{r}$ might be $+\infty$.
Moreover, the following three statements are equivalent

$(1)$ $\lim_{r\to 1-0}\frac{g_{w}(-\log r)-g_{w}(0)}{r-1}=g_{w}(0)$;

$(2)$ $\lim_{r\to 0+0}\frac{g_{w}(-\log r)}{r}=g_{w}(0)$;

$(3)$ $g_{w}(-\log r)=rg(0)$ for any $r\in(0,1]$.
\end{Corollary}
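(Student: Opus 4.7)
The plan is to reduce the statement to elementary properties of a real-valued concave function vanishing at the left endpoint. I set $h(r):=g_{w}(-\log r)$ on $(0,1]$, which by Proposition \ref{prop:logconcave} is concave; using the boundary behaviour $\lim_{r\to 0+}h(r)=0$ noted just before the statement, I extend $h$ continuously to $[0,1]$ by $h(0):=0$. A short limiting argument in the concavity inequality shows that $h$ remains concave on the full interval $[0,1]$. So the whole corollary becomes a statement about a concave function $h\colon[0,1]\to\mathbb{R}$ with $h(0)=0$ and $h(1)=g_{w}(0)$.

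For the two inequalities in \eqref{equ:concave_domain}, I would invoke the two standard manifestations of concavity in terms of difference quotients. First, since $h(0)=0$, the chord slope $q(r):=h(r)/r$ is non-increasing on $(0,1]$, hence $q(r)\geq q(1)=h(1)$ for every $r$ and the limit as $r\to 0+$ exists in $[h(1),+\infty]$. This yields the right-hand inequality. Second, the secant slope $s(r):=(h(1)-h(r))/(1-r)$ is non-increasing in $r\in[0,1)$ with supremum $s(0)=h(1)$ and infimum $\lim_{r\to 1-}s(r)=h'_{-}(1)$, so $h'_{-}(1)\leq h(1)$; this is the left-hand inequality after rewriting $(h(r)-h(1))/(r-1)=s(r)$.

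For the equivalence of (1), (2), and (3): the implications (3)$\Rightarrow$(1) and (3)$\Rightarrow$(2) follow from a direct computation, since in that case $h$ is linear. For (2)$\Rightarrow$(3), the non-increasing $q$ satisfies $q(1)=h(1)=\lim_{r\to 0+}q(r)$, so $q$ must be constantly $h(1)$, whence $h(r)=rh(1)$. For (1)$\Rightarrow$(3), the non-increasing $s$ satisfies $s(0)=h(1)=\lim_{r\to 1-}s(r)$, so $s$ too is constant, and again $h(r)=rh(1)$. I do not anticipate any substantive obstacle; the only minor care needed is with the case $\lim_{r\to 0+}h(r)/r=+\infty$, but this is already allowed in the statement and is handled automatically by monotone convergence in the extended real line.
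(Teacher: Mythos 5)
Your proposal is correct, and it is essentially the argument the paper intends: the paper states the corollary as a direct consequence of Proposition \ref{prop:logconcave} together with $\lim_{r\to 0+0}g_{w}(-\log r)=0$, leaving implicit exactly the standard facts you spell out (monotonicity of chord slopes of the concave function $h(r)=g_{w}(-\log r)$ extended by $h(0)=0$). Your explicit treatment of the extension of concavity to $[0,1]$ and of the possibly infinite limit at $0+$ fills in the routine details the paper omits, with no deviation in method.
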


\subsection{Green's function and Bergman kernel}
Note that there exists a local coordinate $z'$ on a neighborhood $U_{0}$ of $z_{0}\in\partial D$ such that
$\partial D|_{U_{0}}=\{\Im z'=0\}$, which implies the following well-known Lemma.

\begin{Lemma}
\label{lem:green}
The Green's function $G(z,w)$ has an analytic extension on $(U\times V)\setminus \{z=w\}$,
where $U$ is a neighborhood of $\bar{D}$ and $V\subset\subset D$.
\end{Lemma}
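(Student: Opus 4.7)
The plan is to extend $G(\cdot,w)$ across $\partial D$ in the $z$-variable, uniformly for $w$ in the relatively compact set $V$, by applying the Schwarz reflection principle for harmonic functions in the analytic boundary coordinate; joint real-analyticity in $(z,w)$ will then be read off from the explicit pointwise nature of the reflection.

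First I would fix $w\in V$ and observe that, since $V\subset\subset D$, the function $z\mapsto G(z,w)$ is harmonic on a neighborhood of $\partial D$ inside $\bar{D}$, continuous up to $\partial D$, and identically $0$ there. At each boundary point $z_{0}\in\partial D$ the hypothesis supplies an analytic local coordinate $z'$ on a neighborhood $U_{0}$ with $\partial D\cap U_{0}=\{\Im z'=0\}$; after possibly replacing $z'$ by $-z'$ and shrinking $U_{0}$ so that $U_{0}\cap V=\emptyset$, we have $D\cap U_{0}=\{\Im z'>0\}$. Schwarz reflection then extends $G(\cdot,w)$ harmonically to all of $U_{0}$ via the odd reflection $\tilde{G}(z,w):=-G(\sigma(z),w)$ for $z\in U_{0}\setminus\bar{D}$, where $\sigma$ denotes the anti-holomorphic reflection $z'\mapsto\overline{z'}$. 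Covering the compact set $\partial D$ by finitely many such patches and using uniqueness of harmonic continuation on overlaps, I obtain a single neighborhood $U\supset\bar{D}$, depending only on $\partial D$, on which $G(\cdot,w)$ is harmonic away from $z=w$ for every $w\in V$.

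For joint real-analyticity in $(z,w)$, I would write $G(z,w)=\log|z-w|+h(z,w)$ where $h$ is real-analytic on $D\times D$ by hypothesis, and note that the boundary trace $h(z,w)=-\log|z-w|$ on $\partial D\times V$ is already jointly real-analytic since $z\ne w$. Rewriting the reflection identity for $h$ expresses the extension as an explicit combination of $h(\sigma(z),w)$, $\log|\sigma(z)-w|$, and $\log|z-w|$, all of which are jointly real-analytic in $(z,w)$ on the reflected side because $V$ avoids both $z$ and $\sigma(z)$. Hence $G(z,w)=\log|z-w|+h(z,w)$ extends real-analytically to $(U\times V)\setminus\{z=w\}$. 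The one delicate point is precisely the joint analyticity in $(z,w)$ rather than separate analyticity, and it is resolved by the observation that Schwarz reflection is a fixed pointwise formula in the analytic boundary coordinate, so it automatically preserves analytic dependence on the parameter $w$.
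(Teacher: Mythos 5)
Your route is exactly the one the paper has in mind: the paper gives no proof of this lemma at all, merely pointing to the flattening coordinate $z'$ with $\partial D|_{U_{0}}=\{\Im z'=0\}$ and calling the result well-known, and Schwarz reflection in that coordinate is the standard way to cash in that remark. Your reflection step is correct: for $w\in V\subset\subset D$, after shrinking the charts away from $V$, the odd extension $\tilde{G}(z,w)=-G(\sigma(z),w)$ is harmonic across the boundary for each fixed $w$, the patches glue by uniqueness of harmonic continuation (one should take the charts small and connected so that overlaps meet $D$, or note that two anti-holomorphic reflections fixing the same analytic arc coincide), and joint real-analyticity is clear at every point $(z_{0},w_{0})$ with $z_{0}\notin\partial D$, since on either side of $\partial D$ the extension is given by a single explicit formula built from $h$, $\log|z-w|$, $\log|\sigma(z)-w|$ and the real-analytic map $\sigma$.

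The step that is asserted rather than proved is the one you yourself flag: joint real-analyticity at points $(z_{0},w_{0})$ with $z_{0}\in\partial D$. The claim that ``Schwarz reflection is a fixed pointwise formula, so it automatically preserves analytic dependence on $w$'' does not settle this, because no single formula is valid in a full neighborhood of such a point: the glued function equals $G(z,w)$ on one side of $\partial D$ and $-G(\sigma(z),w)$ on the other. What the reflection principle gives for free is only \emph{separate} analyticity -- analytic in $z$ across the interface for each fixed $w$, and analytic in $w$ for each fixed $z$ (trivially when $z\in\partial D$, where the function vanishes identically) -- and separate real-analyticity does not in general imply joint real-analyticity. The gap is genuine but standard to close. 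For instance: work in the chart, center a small disc $B_{\rho}$ at $z_{0}'$ on the real axis, and represent the (fixed-$w$) harmonic extension on $B_{\rho'}$, $\rho'<\rho$, by its Poisson integral over $\partial B_{\rho}$; the Poisson kernel extends holomorphically in the complexified $z'$-variables uniformly in the angle $\theta$, and the boundary data $\theta\mapsto u(\rho e^{i\theta},w)$ is, except at the two angles where the circle meets the real axis, given by your two explicit formulas on compact sets avoiding the singularity, hence extends holomorphically in the complexified $w$-variables to a fixed neighborhood of $V$ with uniform bounds; holomorphy under the integral sign then yields joint real-analyticity. Alternatively, one may quote the classical fact (essentially due to Lelong) that a continuous, separately harmonic function on a product domain is jointly real-analytic, which applies verbatim to your glued extension since it is harmonic in $z$ for each $w\in V$ and harmonic in $w$ for each fixed $z$.
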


Note that the Bergman kernel $B(z,\bar{w})$ on $D\times D$ equals $\frac{\partial}{\partial z}\frac{\partial}{\partial\bar{w}}G(z,w)$ (see \cite{Bergman}),
then it follows from Lemma \ref{lem:green} that
$B(\cdot,\bar{w})$ is smooth on a neighbohood of $\bar{D}$ for any given $w\in D$.
Note that $\frac{B(\cdot,\bar{w})}{B(w,\bar{w})}$ is the (unique) holomorphic function satisfying $\int_{D}|\frac{B(\cdot,\bar{w})}{B(w,\bar{w})}|^{2}=g_{w}(0)$
and $\frac{B(\cdot,\bar{w})}{B(w,\bar{w})}(w)=1$ (see \cite{Bergman}),
then it follows that
\begin{Remark}
\label{rem:bergman_smooth}
There exists a (unique) holomorphic function $f$ $(=\frac{B(\cdot,\bar{w})}{B(w,\bar{w})})$,
which is smooth on a neighhorhood of $\bar{D}$ such that
$f(w)=1$ and $\int_{D}|f|^{2}=g_{w}(0)$.
\end{Remark}

\subsection{A solution of a conjecture of Suita}

We recall the following solution of a conjecture posed by Suita \cite{suita}.
\begin{Theorem}\label{thm:suita}(\cite{guan-zhou13ap})
$(c_{\beta}(z_{0}))^{2}=\pi B(z_{0})$ holds for some $z_{0}\in D$
if and only if $D$ conformally equivalent to the unit disc i.e. $(n=1)$,
where $c_{\beta}(z_{0})=\lim_{z\to z_{0}}\exp(G(z,z_{0})-\log|z-z_{0}|)$.
\end{Theorem}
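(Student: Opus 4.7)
The plan is to read Theorem \ref{thm:suita} off of Corollary \ref{coro:concave_domain}, by identifying the two endpoints of (\ref{equ:concave_domain}) with $1/B(z_0)$ and $\pi/c_\beta(z_0)^2$ respectively, and then using the equivalence in that corollary to characterize equality.

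For the left endpoint, Remark \ref{rem:bergman_smooth} already gives $g_{z_0}(0)=1/B(z_0)$. For the right endpoint I would analyze the sublevel set $\Omega_r:=\{z\in D:2G(z,z_0)<\log r\}$ as $r\to 0^+$. The definition of $c_\beta(z_0)$ gives the local expansion $G(z,z_0)=\log(|z-z_0|\cdot c_\beta(z_0))+o(1)$ near $z_0$, so $\Omega_r$ is, up to higher-order error, a disc about $z_0$ of radius $\sqrt r/c_\beta(z_0)$ and area $\pi r/c_\beta(z_0)^2$. The upper bound $g_{z_0}(-\log r)\le \pi r/c_\beta(z_0)^2+o(r)$ follows by plugging the constant test function $f\equiv 1$ (or the restriction of the global extremal) into the minimization on $\Omega_r$, and the matching lower bound comes from the sub-mean-value inequality on the largest disc centered at $z_0$ inside $\Omega_r$. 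These together give $\lim_{r\to 0^+}g_{z_0}(-\log r)/r=\pi/c_\beta(z_0)^2$, and substituting into (\ref{equ:concave_domain}) yields $c_\beta(z_0)^2\le \pi B(z_0)$ for every $z_0\in D$.

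The \emph{if} direction of the equality characterization is immediate: when $n=1$, a Riemann map $\Phi:D\to\{|z|<1\}$ with $\Phi(z_0)=0$ reduces the statement to the model case $D=\{|z|<1\}$, $z_0=0$, where $c_\beta(0)=1$ and $\pi B(0)=1$ match by direct computation, and both $c_\beta^2$ and $\pi B$ transform the same way under a conformal change of coordinate.

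The \emph{only if} direction is the main obstacle. Corollary \ref{coro:concave_domain} converts the equality $c_\beta(z_0)^2=\pi B(z_0)$ into the rigid condition $g_{z_0}(-\log r)=r\,g_{z_0}(0)$ for every $r\in(0,1]$. Writing $f^*:=B(\cdot,\bar z_0)/B(z_0)$ for the global extremal and noting that $f^*|_{\Omega_r}$ is admissible for the problem on $\Omega_r$, linearity in $r$ combined with the co-area formula applied to the exhaustion $\rho(z):=e^{2G(z,z_0)}$ forces the pushforward of $|f^*|^2\,dA$ under $\rho$ to be uniform on $(0,1)$. This uniformity translates, after a computation on the regular level sets, into $f^*$ being proportional to the derivative of a branch of the multi-valued holomorphic function $F(z)=e^{G(z,z_0)+i\widetilde G(z,z_0)}$, where $\widetilde G$ is the harmonic conjugate of $G(\cdot,z_0)$. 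Since $f^*$ is single-valued on $D$, $F$ itself must be single-valued, which forces the periods of $\widetilde G$ around each inner boundary component to vanish; this is only possible when there are no inner boundary components, i.e.\ $n=1$, and $F$ is then the desired conformal map $D\to\{|z|<1\}$. The technically delicate pieces are the passage from the integral equality to the pointwise identity (requiring a careful co-area / Sard argument near the finitely many critical points of $G(\cdot,z_0)$) and the identification of period-vanishing with simple connectedness of $D$.
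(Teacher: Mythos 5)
You should first be aware that the paper contains no proof of this statement: Theorem \ref{thm:suita} is quoted from \cite{guan-zhou13ap}, where the equality case of the Suita conjecture is settled by analyzing when equality holds in the optimal $L^{2}$ extension theorem --- a different and much heavier mechanism than your concavity-based route. So your attempt must be judged on its own merits. Its first half is correct, and in fact reproduces machinery the paper itself assembles for other purposes: $g_{z_{0}}(0)=1/B(z_{0})$ is equation \ref{equ:Bergman}, your sublevel-set computation of $\lim_{r\to 0+0}g_{z_{0}}(-\log r)/r=\pi/c_{\beta}(z_{0})^{2}$ is exactly Lemma \ref{lem:log_capacity}, and the inequality $c_{\beta}(z_{0})^{2}\leq\pi B(z_{0})$ then follows from the right-hand side of \ref{equ:concave_domain}. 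The \emph{if} direction via a Riemann map and conformal covariance is also fine.

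The \emph{only if} direction, however, has two genuine gaps, both at the steps you yourself flag as ``delicate.'' First, to apply the co-area formula you need the exact identity $\int_{\Omega_{r}}|f^{*}|^{2}=r\,g_{z_{0}}(0)$ on $\Omega_{r}:=\{2G(\cdot,z_{0})<\log r\}$, but admissibility of $f^{*}|_{\Omega_{r}}$ only gives the one-sided bound $\int_{\Omega_{r}}|f^{*}|^{2}\geq g_{z_{0}}(-\log r)=r\,g_{z_{0}}(0)$. By the orthogonality property of the minimizer $f_{r}$ on $\Omega_{r}$ one has $\int_{\Omega_{r}}|f^{*}|^{2}=r\,g_{z_{0}}(0)+\int_{\Omega_{r}}|f^{*}-f_{r}|^{2}$, so what you actually need is the rigidity statement $f_{r}=f^{*}|_{\Omega_{r}}$; this does not follow from the \emph{statement} of Corollary \ref{coro:concave_domain} (linearity of $g_{z_{0}}$), and would require opening up the proof of Proposition \ref{prop:logconcave} in \cite{guan-effect} or supplying an independent argument. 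Second, even granting uniform pushforward, the co-area identity yields only one scalar equation per level curve, namely $\int_{\{2G=\log s\}}|f^{*}|^{2}\,|\nabla e^{2G}|^{-1}\,d\sigma=\mathrm{const}$, whereas the pointwise proportionality $f^{*}=c\,F'$ with $F=e^{G+i\widetilde{G}}$ is strictly stronger; nothing in your sketch bridges an integral identity on each level set to a pointwise one (one would need, say, an equality analysis in Cauchy--Schwarz along level curves, or a comparison of the harmonic functions $\log|f^{*}|$ and $G$). Your concluding period argument is essentially sound --- around each inner contour $\widetilde{G}$ has period $2\pi\omega_{j}(z_{0})\in(0,2\pi)$ when $n>1$ (it need not \emph{vanish}, only lie in $2\pi\mathbb{Z}$, which is equally impossible), so $F'$ and hence $f^{*}$ could not be single-valued --- but it rests on the two unproved steps, so the proposal as written does not constitute a proof of the equality characterization.
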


Note that $2G(z,z_{0})-2\log|z-z_{0}|$ is harmonic on $D$ (continuous near $z_{0}$),
then it follows that
\begin{Lemma}
\label{lem:log_capacity}
$\frac{(c_{\beta}(z_{0}))^{2}}{\pi}=\lim_{r\to 0+0}\frac{r}{g_{z_{0}}(-\log {r})}$.
\end{Lemma}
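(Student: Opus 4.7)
The plan is to exploit the fact pointed out in the sentence preceding the statement, namely that $h(z) := 2G(z,z_0) - 2\log|z-z_0|$ extends harmonically (hence continuously) across $z_0$, and satisfies $(c_\beta(z_0))^{2}=e^{h(z_0)}$. The sublevel sets $\Omega_{t} := \{z\in D:2G(z,z_0)<-t\}$ should then be squeezed between round discs centered at $z_0$ as $t\to+\infty$, and on a round disc the minimal $L^{2}$ integration has a well-known closed form. The limit as $r\to0+$ will then fall out.

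First, fix $\varepsilon>0$ and choose a neighborhood $U\Subset D$ of $z_0$ on which $|h(z)-h(z_0)|<\varepsilon$. Setting $r=e^{-t}$, the inequality $2G(z,z_0)<-t$ rewrites on $U$ as $|z-z_0|^{2}<e^{-t-h(z)}$, which by the bounds on $h$ yields the inclusions
\[
\bigl\{|z-z_0|<\sqrt{r}\,e^{-(h(z_0)+\varepsilon)/2}\bigr\}\;\subset\;\Omega_{t}\cap U\;\subset\;\bigl\{|z-z_0|<\sqrt{r}\,e^{-(h(z_0)-\varepsilon)/2}\bigr\}.
\]
Since $G(\cdot,z_0)$ is bounded on $D\setminus U$, for all $t$ sufficiently large one has $\Omega_{t}\subset U$, so the intersection with $U$ may be dropped.

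Second, I would invoke the monotonicity of the minimal $L^{2}$ integration under domain inclusion: if $\Omega\subset\Omega'$ both contain $z_0$, then restricting an admissible function on $\Omega'$ to $\Omega$ decreases its $L^{2}$ norm, hence $g_{\Omega,z_0}(0)\leq g_{\Omega',z_0}(0)$. Together with the elementary identity $g_{B(z_0,\rho),z_0}(0)=\pi\rho^{2}$ (equivalent to $B_{B(z_0,\rho)}(z_0,z_0)=1/(\pi\rho^{2})$), the sandwich above gives
\[
\pi r\,e^{-h(z_0)-\varepsilon}\;\leq\;g_{z_0}(-\log r)\;\leq\;\pi r\,e^{-h(z_0)+\varepsilon}
\]
for all sufficiently small $r>0$. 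Dividing, taking $r\to0+$ and then $\varepsilon\to0$, I obtain the asserted identity $\lim_{r\to0+}r/g_{z_0}(-\log r)=e^{h(z_0)}/\pi=(c_\beta(z_0))^{2}/\pi$.

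I do not expect a serious obstacle. The only subtle points are (i) verifying that $\Omega_{t}$ eventually lies inside the small neighborhood $U$ on which $h$ is well controlled, which uses that $G(\cdot,z_0)$ tends to $-\infty$ only at $z_0$ and is otherwise bounded below on compact subsets, and (ii) getting the direction of monotonicity of $g$ right under domain inclusion. Both are standard, so the entire argument should reduce to the soft comparison with a disc sketched above.
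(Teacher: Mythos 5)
Your proposal is correct and takes essentially the same approach as the paper: Guan likewise uses the continuity of $G(z,z_0)-\log|z-z_0|$ at $z_0$ to squeeze the sublevel set $\{2G(\cdot,z_0)<\log r\}$ between round discs of radii $c_\beta(z_0)^{-1}\sqrt{r}\,e^{\pm\varepsilon}$, and then bounds $g_{z_0}(-\log r)$ above and below by the areas of those discs, which is exactly your monotonicity-plus-disc-kernel comparison written in terms of Lebesgue measure.
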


\begin{proof}
Note that $c_{\beta}(z_{0})=\lim_{z\to z_{0}}\exp(G(z,z_{0})-\log|z-z_{0}|)$ implies that
for any $\varepsilon>0$, then there exists a neighborhood $U_{0}$ of $z_{0}$ such that
\begin{equation}
\label{equ:log_cap}
|G(z,z_{0})-\log (c_{\beta}(z_{0})|z-z_{0}|)|<\varepsilon.
\end{equation}
As $G(z,z_{0})$ and $\log (c_{\beta}(z_{0})|z-z_{0}|)$ both go to $-\infty$ ($z\to z_{0}$),
then there exists $\delta_{0}>0$, such that $\{G(z,z_{0})<\frac{1}{2}\log r\}\subset U_{0}$
and $\{\log (c_{\beta}(z_{0})|z-z_{0}|)<\frac{1}{2}\log r\}\subset U_{0}$ for any $r\in(0,\delta_{0})$.
It follows from inequality \ref{equ:log_cap} that for any $r\in(0,\delta_{0}e^{-2\varepsilon})$,
\begin{equation}
\label{equ:subset}
\begin{split}
\{\log (c_{\beta}(z_{0})|z-z_{0}|)+\varepsilon<\frac{1}{2}\log r\}
&\subset\{G(z,z_{0})<\frac{1}{2}\log r\}
\\&\subset\{\log (c_{\beta}(z_{0})|z-z_{0}|)-\varepsilon<\frac{1}{2}\log r\}\subset U_{0}
\end{split}
\end{equation}
which implies
\begin{equation}
\label{equ:measure}
\begin{split}
\mu\{\log (c_{\beta}(z_{0})|z-z_{0}|)+\varepsilon<\frac{1}{2}\log r\}
&\leq g_{z_{0}}(-\log {r})
\\&\leq\mu\{\log (c_{\beta}(z_{0})|z-z_{0}|)-\varepsilon<\frac{1}{2}\log r\},
\end{split}
\end{equation}
i.e.
\begin{equation}
c^{-2}_{\beta}(z_{0})\pi r e^{-2\varepsilon}\leq g_{z_{0}}(-\log {r})\leq c^{-2}_{\beta}(z_{0})\pi r e^{2\varepsilon},
\end{equation}
where $\mu$ is the Lebesgue measure on $\mathbb{C}$.
Then Lemma \ref{lem:log_capacity} has been proved by the arbitrariness of $\varepsilon>0$.
\end{proof}
Note that
\begin{equation}
\label{equ:Bergman}
B(z_{0})=\frac{1}{g_{z_{0}}(0)},
\end{equation}
then it follows from Theorem \ref{thm:suita} and Lemma \ref{lem:log_capacity} that

\begin{Remark}
\label{rem:suita}
Statement (2) in Corollary \ref{coro:concave_domain} holds if and only if $D$ is the unit disc.
\end{Remark}

\subsection{Conjugate analytic Hardy space on $D$}

\begin{Lemma}
\label{lem:conjugate}
For any given $w_{0}\in D$,
and holomorphic function $f$ on $D$ which is continuous on $\bar{D}$,
\begin{equation}
\label{equ:concave_boundary}
\lim_{r\to 1-0}\frac{\int_{\{e^{2G(z,w_{0})}\geq r\}}|f(z)|^{2}}{1-r}=\int_{\partial D}|f(z)|^{2}(\frac{\partial 2G(z,w_{0})}{\partial \nu_{z}})^{-1}d|z|
\end{equation}
holds,
where $\frac{\partial}{\partial \nu_{z}}$ is the derivative along the outer normal unit vector $\nu_{z}$.
\end{Lemma}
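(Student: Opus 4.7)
\medskip

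\noindent\textbf{Proof plan.} The plan is to reduce the statement to a coarea-type formula applied to the defining function $\varphi(z):=e^{2G(z,w_{0})}$ in a tubular neighborhood of $\partial D$. By Lemma \ref{lem:green}, $G(\cdot,w_{0})$ extends real-analytically across $\partial D$ (it is harmonic and already continuous up to the analytic boundary), so $\varphi$ is smooth on a neighborhood of $\bar D$. On $\partial D$ we have $G(\cdot,w_{0})\equiv 0$, hence $\varphi\equiv 1$ there, while $\varphi<1$ inside $D$. Moreover Hopf's lemma (equivalently, the stated positivity of $\partial G/\partial\nu_{z}$ on $\partial D$ recalled in the introduction) gives $\partial\varphi/\partial\nu_{z}=2(\partial G/\partial\nu_{z})>0$ on $\partial D$, so $\nabla\varphi$ is nonvanishing near $\partial D$ and is purely outward-normal on $\partial D$ itself, meaning $|\nabla\varphi|=\partial\varphi/\partial\nu_{z}=2\,\partial G(z,w_{0})/\partial\nu_{z}$ on $\partial D$.

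First I would fix a tubular neighborhood $W$ of $\partial D$ on which $\varphi$ is smooth with $|\nabla\varphi|\geq c>0$. For $r$ close enough to $1$, the superlevel set $\{\varphi\geq r\}\cap D$ lies in $W$, and the implicit function theorem supplies a smooth foliation of $W\cap D$ by the level curves $\{\varphi=s\}$, $s\in(1-\delta,1]$, each of which is a finite union of smooth closed curves close to $\partial D$. On $W$ I would apply the classical coarea formula
\begin{equation*}
\int_{\{\varphi\geq r\}\cap D}|f(z)|^{2}\,dA(z)=\int_{r}^{1}\left(\int_{\{\varphi=s\}}\frac{|f(z)|^{2}}{|\nabla\varphi(z)|}\,d\sigma(z)\right)ds,
\end{equation*}
which is valid because $f$ is continuous on $\bar D$ and $|\nabla\varphi|$ is bounded away from $0$ on $W$. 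This reduces the left-hand side of \eqref{equ:concave_boundary} to the mean value of a function on $(r,1)$.

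Dividing by $1-r$ and letting $r\to 1^{-}$, the fundamental theorem of calculus gives
\begin{equation*}
\lim_{r\to 1-0}\frac{1}{1-r}\int_{\{\varphi\geq r\}}|f|^{2}=\lim_{s\to 1-0}\int_{\{\varphi=s\}}\frac{|f(z)|^{2}}{|\nabla\varphi(z)|}\,d\sigma(z),
\end{equation*}
provided the inner integral is continuous in $s$ at $s=1$. This continuity follows by pulling back to $\partial D$ via the normal-flow diffeomorphism: under this parameterization $d\sigma$ on $\{\varphi=s\}$ converges uniformly to $d|z|$ on $\partial D$, and both $|f|^{2}$ and $1/|\nabla\varphi|$ are continuous up to $\partial D$. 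Identifying $|\nabla\varphi|=2\,\partial G(z,w_{0})/\partial\nu_{z}$ on $\partial D$ yields the right-hand side of \eqref{equ:concave_boundary}.

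The main obstacle is a careful but soft one: justifying the coarea formula on the tubular neighborhood and the passage to the limit in $s$. The smoothness of $\varphi$ up to $\bar D$ (via Lemma \ref{lem:green}) together with the strict positivity of $\partial G/\partial\nu_{z}$ on the analytic boundary is precisely what makes both steps routine; once these regularity ingredients are in place, the remainder is a direct computation.
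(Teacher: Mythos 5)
Your proof is correct and follows essentially the same route as the paper's: both fiber the thin collar $\{e^{2G(\cdot,w_{0})}\geq r\}$ by the level sets of the Green's function, reduce the area integral to a one-dimensional average over the level parameter, and pass to the limit by continuity together with the fundamental theorem of calculus. The only difference is packaging: where you invoke the coarea formula and the normal-flow diffeomorphism, the paper derives the same identity by hand, covering $\partial D$ by charts with coordinates $(u,v)=(x,2G)$ (or $(2G,y)$), using a partition of unity, and carrying out the explicit Jacobian computation that identifies the level-set density as $(\frac{\partial 2G(z,w_{0})}{\partial \nu_{z}})^{-1}d|z|$.
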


\begin{proof}

As $\frac{\partial}{\partial \nu_{z}}G(z,w_{0})$ is positive on $\partial D$,
it is clear that $\frac{\partial}{\partial y}G(z_{b},w_{0})\neq 0$ or $\frac{\partial}{\partial x}G(z_{b},w_{0})\neq 0$,
where $z_{b}\in\partial D$.
Then there exists a neighborhood $U_{b}$ of $z_{b}$
with coordinates $(u,v)=(x,2G(x+\sqrt{-1}y,w_{0}))$ or $(2G(x+\sqrt{-1}y,w_{0}),y)$ on $U_{b}$.
Note that $\partial D$ is compact,
then there exist finite $U_{b}$ covering $\partial D$.
It is clear that one can choose finite unitary decomposition $\{\rho_{\lambda}\}_{\lambda}$ such that $\sum\rho_{\lambda}=1$ near $\partial D$,
and for any $\lambda$, $Supp(\rho_{\lambda})\subset U_{b}$ for some $z_{b}$.

Without losing of generality,
we assume that $\frac{\partial}{\partial y}G(z_{b},w_{0})\neq 0$,
where $z_{b}\in\partial D$.
Then there exists a neighborhood $U_{b}$ of $z_{b}$
with coordinates $(u,v)=(x,2G(x+\sqrt{-1}y,w_{0}))$,
where $u\in (a_{1},a_{2}),v\in (\log r_{b},-\log r_{b})$ and $r_{b}\in(0,1)$.

It suffices to consider that $|f|^{2}\rho$ instead of $|f|^{2}$ in equality \ref{equ:concave_boundary},
where $Supp\{\rho\}\subset\subset U_{b}$ and $\rho$ is smooth.
It is clear that $\frac{\partial u}{\partial x}=1$, $\frac{\partial u}{\partial y}=0$,
$\frac{\partial v}{\partial x}=\frac{\partial}{\partial x}2G(z,w_{0})$,
and $\frac{\partial v}{\partial y}=\frac{\partial}{\partial y}2G(z,w_{0})$,
which implies that
$\frac{\partial x}{\partial u}=1$, $\frac{\partial y}{\partial u}=-\frac{\frac{\partial}{\partial x}2G(z,w_{0})}{\frac{\partial}{\partial y}2G(z,w_{0})}$,
$\frac{\partial x}{\partial v}=0$,
and $\frac{\partial y}{\partial v}=(\frac{\partial}{\partial y}2G(z,w_{0}))^{-1}$.
It is clear that equalities
$$\nu_{z}=\frac{(\frac{\partial}{\partial x}2G(z,w_{0}),\frac{\partial}{\partial y}2G(z,w_{0}))}{((\frac{\partial}{\partial x}2G(z,w_{0}))^{2}+(\frac{\partial}{\partial y}2G(z,w_{0}))^{2})^{1/2}}$$
and
$$\frac{\partial}{\partial \nu_{z}}2G(z,w_{0})=((\frac{\partial}{\partial x}2G(z,w_{0}))^{2}+(\frac{\partial}{\partial y}2G(z,w_{0}))^{2})^{1/2}$$
hold,
which implies
\begin{equation}
\label{equ:integ_curve}
((\frac{\partial x}{\partial u})^{2}+(\frac{\partial y}{\partial u})^{2})^{1/2}=\frac{\frac{\partial}{\partial \nu_{z}}2G(z,w_{0})}{|\frac{\partial}{\partial y}2G(z,w_{0})|}
\end{equation}

Replacing the integral variables,
one can obtain that
\begin{equation}
\begin{split}
&\int_{\{e^{2G(z,w)}\geq r\}}|f(z)|^{2}\rho
\\=&\int_{\{a_{1}< u< a_{2},\log r\leq v\leq0\}}|f(z(u,v))|^{2}\rho(z(u,v))(|\frac{\partial}{\partial y}2G(z(u,v),w_{0})|)^{-1},
\end{split}
\end{equation}
which implies that
\begin{equation}
\begin{split}
&\lim_{r\to 1-0}\frac{\int_{\{e^{2G(z,w)}\geq r\}}|f(z)|^{2}\rho}{1-r}
\\=&\lim_{r\to 1-0}\frac{\int_{\{a_{1}< u< a_{2},\log r\leq v\leq0\}}|f(z(u,v))|^{2}\rho(z(u,v))(|\frac{\partial}{\partial y}2G(z(u,v),w_{0})|)^{-1}}{1-r}
\\=&\lim_{r\to 1-0}\frac{\int_{\{\{a_{1}< u< a_{2},\log r\leq v\leq0\}}|f(z(u,v))|^{2}\rho(z(u,v))(|\frac{\partial}{\partial y}2G(z(u,v),w_{0})|)^{-1}}{-\log r}
\\=&\int_{\{a_{1}< u< a_{2}\}}|f(z(u,0))|^{2}\rho(z(u,0))(|\frac{\partial}{\partial y}2G(z(u,0),w_{0})|)^{-1}du
\\=&\int_{\partial D}|f(z)|^{2}\rho(z)(\frac{\partial 2G(z,w_{0})}{\partial \nu_{z}})^{-1}d|z|,
\end{split}
\end{equation}
where the last equality is from equality \ref{equ:integ_curve}.
Then Lemma \ref{lem:conjugate} has been proved.
\end{proof}

\section{Proof of Theorem \ref{thm:sarioh_conj}}

We prove Theorem \ref{thm:sarioh_conj} by two steps:
firstly we prove that $"\geq"$ holds, secondly we prove that $"="$ doesn't hold.

\textbf{Step 1.}
Let $f(z)=\frac{B(\cdot,\bar{w})}{B(w,\bar{w})}$, which implies that
\begin{equation}
\label{equ:mini}
\int_{D}|f|^{2}=g_{w}(0).
\end{equation}
It follows from Remark \ref{rem:bergman_smooth} that
$f$ is continuous on $\bar{D}$,
which implies that
$1=f(w)=\frac{1}{2\pi}\int_{\partial D}f(z)\overline{\hat{R}(z,\bar{w})}(\frac{\partial G(z,w)}{\partial \nu_{z}})^{-1}d|z|$.
By Cauchy-Schwartz Lemma, it follows that
\begin{equation}
\begin{split}
1\leq &\frac{1}{(2\pi)^{2}}(\int_{\partial D}|f(z)|^{2}(\frac{\partial G(z,w)}{\partial \nu_{z}})^{-1}d|z|)
\\&\times(\int_{\partial D}|\hat{R}(z,\bar{w})|^{2}(\frac{\partial G(z,w)}{\partial \nu_{z}})^{-1}d|z|).
\end{split}
\end{equation}

As $f$ is continuous on $\bar{D}$, it follows from Lemma \ref{lem:conjugate} that
\begin{equation}
\label{equ:conjugate}
\begin{split}
\lim_{r\to 1-0}\frac{1-r}{\int_{\{e^{2G(z,w)}\geq r\}}|f(z)|^{2}}
=&(\int_{\partial D}|f(z)|^{2}(\frac{\partial 2G(z,w)}{\partial \nu_{z}})^{-1}d|z|)^{-1}
\\\leq& \frac{1}{2\pi^{2}}(\int_{\partial D}|\hat{R}(z,\bar{w})|^{2}(\frac{\partial G(z,w)}{\partial \nu_{z}})^{-1}d|z|)
\\&=\frac{1}{2\pi^{2}}(\int_{\partial D}\hat{R}(z,\bar{w})\overline{\hat{R}(z,\bar{w})}(\frac{\partial G(z,w)}{\partial \nu_{z}})^{-1}d|z|)
\\&=\frac{1}{\pi}\hat{R}(w,\bar{w})=\frac{1}{\pi}\hat{R}(w)
\end{split}
\end{equation}

Note that
$$g_{w}(-\log r)\leq \int_{\{e^{2G(z,w)}< r\}}|f(z)|^{2},$$
then it follows from equality \ref{equ:mini} that
$$g_{w}(0)-g_{w}(-\log r)\geq\int_{D}|f(z)|^{2}-\int_{\{e^{2G(z,w)}< r\}}|f(z)|^{2}=\int_{\{e^{2G(z,w)}\geq r\}}|f(z)|^{2},$$
which implies
\begin{equation}
\label{equ:boundary}
\lim_{r\to 1-0}\frac{r-1}{g_{w}(-\log r)-g_{w}(0)}\leq\lim_{r\to 1-0}\frac{1-r}{\int_{\{e^{2G(z,w)}\geq r\}}|f(z)|^{2}}.
\end{equation}

It follows from equality \ref{equ:Bergman} and inequality \ref{equ:concave_domain} \ref{equ:boundary} \ref{equ:conjugate} that
\begin{equation}
\label{equ:Sandwich}
\begin{split}
B(w)=\frac{1}{g_{w}(0)}&\leq\lim_{r\to 1-0}\frac{r-1}{g_{w}(-\log r)-g_{w}(0)}
\\&\leq\lim_{r\to 1-0}\frac{1-r}{\int_{\{e^{2G(z,w)}\geq r\}}|f(z)|^{2}}\leq\frac{1}{\pi}\hat{R}(w).
\end{split}
\end{equation}

Then we obtain that $"\geq"$ holds.

\
\textbf{Step 2.}
It suffices to prove $B(w)\neq\frac{1}{\pi}\hat{R}(w)$.
We prove by contradiction:
if not, then $B(w)=\frac{1}{\pi}\hat{R}(w)$ holds.
It follows from \ref{equ:Sandwich} that $\frac{1}{g_{w}(0)}=\lim_{r\to 1-0}\frac{r-1}{g_{w}(-\log r)-g_{w}(0)}$ (statement (1) of Corollary \ref{coro:concave_domain}).
Combining with Corollary \ref{coro:concave_domain} and Remark \ref{rem:suita}, we obtain that $n=1$ which contradicts $n>1$.

Then Theorem \ref{thm:sarioh_conj} has been proved.

\vspace{.1in} {\em Acknowledgements}.
The author would like to thank Professor Xiangyu Zhou for bringing me to Saitoh's conjecture when I was a postdoctor,
and Professor Fusheng Deng, Professor Takeo Ohsawa, Professor Saburou Saitoh for helpful discussions and encouragements.
The author would also like to
thank the hospitality of Beijing International Center for Mathematical Research.

The author was partially supported by NSFC-11522101 and NSFC-11431013.

\bibliographystyle{references}
\bibliography{xbib}

\end{document}